\numberwithin{equation}{section}
\newtheorem{thm}{Theorem}[section]
\newtheorem{lem}[thm]{Lemma}
\newtheorem{rem}[thm]{Remark}
\newtheorem{conj}[thm]{Conjecture}
\newcommand{\re}{\text{Re }}
\numberwithin{equation}{section}
\numberwithin{figure}{section}
\begin{document}
	
	\title[A refinement of Pawlowski's result]{A refinement of Pawlowski's result}

	\author{Teng Zhang}

	\address{School of Mathematics and Statistics, Xi'an Jiaotong University, Xi'an, P.R.China 710049.}
	
	\email{teng.zhang@stu.xjtu.edu.cn}

	\begin{abstract}
Let \(F(z) = \prod_{k=1}^{n}(z - z_k)\) be a monic complex polynomial of degree \(n\) whose zeros satisfy \(\max\limits_{1 \le k \le n} |z_k| \le 1\). Pawłowski [Trans. Amer. Math. Soc. 350(11) (1998)] considered the radius \(\gamma_n\) of the smallest disk, centered at the centroid \(\frac{1}{n}\sum_{k=1}^n z_k\), containing at least one critical point of \(F\), establishing the bound $\gamma_n \le \frac{2\,n^{\frac{1}{n-1}}}{n^{\frac{2}{n-1}} + 1}$. In this paper, inspired by the spirit of Borcea's variance conjectures and leveraging the classical Schoenberg inequality, we significantly refine Pawłowski's estimate by proving succinctly and elegantly that
$\gamma_n \le \sqrt{\frac{n - 2}{n - 1}}$. This result also represents a rare and noteworthy application of Schoenberg's inequality to the geometry of polynomial critical points.
	\end{abstract}
	
	\subjclass[2020]{Primary 30C10}
	
	\keywords{Polynomial, Location of zeros, Critical points.}
	\maketitle
	
	\section{Introduction}
	In 1958, Sendov proposed his well-known conjecture concerning the relative locations of the zeros and critical points of complex polynomials. This problem was later featured in Hayman's influential research problems book \cite{Hay19}.
	\begin{conj}[Sendov Conjecture] Let $ F $ be a complex polynomial of degree $ n \geq 2 $ with zeros $ z_1, \dots, z_n $ (all satisfying $ |z_k| \leq 1 $) and critical points $ w_1, \dots, w_{n-1} $. Then
		$$ \max_{1 \le k \le n} \, \min_{1 \le j \le n-1} |z_k - w_j| \le 1. $$
	\end{conj}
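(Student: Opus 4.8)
The plan is to exploit every normalization first, and only then bring quantitative inequalities to bear. By scaling, assume $\max_{1\le k\le n}|z_k|=1$; since the left-hand side is a maximum over the zeros, it suffices to fix one zero $a:=z_1$ (so $|a|\le 1$) and prove that $F'$ has a zero in the closed disk $\overline{D}(a,1)$, and after a rotation we may take $a\in[0,1]$. Equivalently one shows $\max_F \operatorname{dist}(a,\{w_1,\dots,w_{n-1}\})\le 1$, the maximum ranging over monic degree-$n$ polynomials with all zeros in $\overline{D}(0,1)$ one of which equals $a$; this maximum is attained by compactness.

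The first genuine step is a variance estimate in the spirit of the present paper. Schoenberg's inequality gives $\sum_{j=1}^{n-1}|w_j-\overline z|^2\le\frac{n-2}{n}\sum_{k=1}^{n}|z_k-\overline z|^2$, where $\overline z=\frac1n\sum_{k=1}^{n}z_k$ is the common centroid of the zeros and of the critical points. Two applications of the parallel-axis identity (recentering at $a$, then rewriting $\sum_k|z_k-\overline z|^2$) together with the Jensen bound $|\overline z-a|^2\le\frac1n\sum_k|z_k-a|^2$ collapse this into the clean estimate
\[
\min_{1\le j\le n-1}|w_j-a|^2\ \le\ \frac1{n-1}\sum_{j=1}^{n-1}|w_j-a|^2\ \le\ \frac1n\sum_{k=1}^{n}|z_k-a|^2 .
\]
Hence $\operatorname{dist}(a,\{w_j\})\le 1$ automatically whenever the root-mean-square distance from $a$ to the zeros is at most $1$, i.e. whenever $\sum_{k=1}^{n}|z_k-a|^2\le n$; this settles a large, explicitly described family of configurations, and in combination with the Gauss--Lucas theorem (which keeps every $w_j$ inside $\overline{D}(0,1)$) it disposes of all sufficiently spread-out ones. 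What remains is the heavy-tailed regime: $a$ on or near the unit circle with the remaining zeros concentrated near the antipode $-a$, where $\sum_k|z_k-a|^2$ is close to its extreme value $4(n-1)$ and the variance bound overshoots.

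For that regime I would pass to an extremizer $F_\ast$ of $\operatorname{dist}(a,\{w_j\})$ and extract structure from first-order stationarity. Perturbing a single free zero $z_k$ radially, and using the implicit-function formula for $\partial w_j/\partial z_k$ obtained by differentiating $\sum_{\ell}(w_j-z_\ell)^{-1}=0$, one should find that no free zero can sit strictly inside $\overline{D}(0,1)$, so $z_2,\dots,z_n$ all lie on the unit circle; the remaining tangential stationarity conditions then cut the candidates down to a short explicit list --- root-of-unity sprays such as $F(z)=z^n-1$ and ``dumbbell'' families with $z_1=1$ and $z_2=\cdots=z_n$ clustered near $-1$ --- for which the critical points can be written down or sharply estimated by hand, yielding $\operatorname{dist}(a,\{w_j\})\le 1$.

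The main obstacle is precisely this last step, and it is the reason the statement is so resistant. Proving that the list of stationary configurations is \emph{complete}, and that none of them breaks the bound, forces one to analyse $\sum_{k\ge 2}(w-z_k)^{-1}=-(w-a)^{-1}$ directly as the bulk zeros pile up near $-a$, controlling every error term \emph{uniformly in $n$} --- one must show that tightening the cluster always drags one critical point back across into $\overline{D}(a,1)$. A compactness-driven version of this succeeds once $n$ is large, but making it exact and effective for \emph{every} $n\ge 2$, rather than merely asymptotic, is where the argument genuinely hardens and is the part I expect to consume almost all the work.
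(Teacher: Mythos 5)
This statement is Sendov's conjecture, which the paper records \emph{as a conjecture} precisely because it is an open problem: the paper itself cites only partial results (degree at most eight by Brown--Xiang, sufficiently high degree by Tao) and nowhere claims a proof. So there is no proof in the paper to compare against, and your proposal is not one either. The quantitative half you actually carry out is correct: combining Schoenberg's inequality with the parallel-axis identity and the Jensen bound $|\overline z-a|^2\le\frac1n\sum_{k}|z_k-a|^2$ does give $\frac{1}{n-1}\sum_{j}|w_j-a|^2\le\frac1n\sum_{k}|z_k-a|^2$, and this is exactly the flavor of argument the paper uses for the much weaker centroid statements (Theorem \ref{1.1t} and Theorem \ref{mt}). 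But, as you concede, it only settles configurations with $\sum_k|z_k-a|^2\le n$, while the dangerous configurations for Sendov have this sum near $4(n-1)$.

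The genuine gap is the entire second half. ``Pass to an extremizer, show the free zeros migrate to the unit circle, reduce to a short explicit list of stationary configurations, and check each'' is a description of a research programme, not an argument: the completeness of that list is not established, the first-order variational analysis is not carried out, and the uniform-in-$n$ control of the regime where the remaining zeros cluster near $-a$ is exactly where every known attack --- including Tao's, which is only asymptotic in $n$ --- stops short of a full proof. Your own closing paragraph acknowledges that this step ``is where the argument genuinely hardens.'' What you have is a clean variance-based sufficient condition for a given zero to satisfy the Sendov bound, plus an honest statement of what remains open; what remains open is the conjecture itself.
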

Despite numerous ingenious approaches developed in attempts to resolve Sendov’s conjecture, only partial results are currently known (see, e.g., \cite{BX99,Tao22}). Among contributors to this problem, Schmeisser \cite{Sch77} proposed the following generalization of the conjecture.
	\begin{conj}[Schmeisser Conjecture]Let $ F $ be a complex polynomial of degree $ n \geq 2 $ with zeros $ z_1, \dots, z_n $ (all satisfying $ |z_k| \leq 1 $) and critical points $ w_1, \dots, w_{n-1} $. Then for  any nonnegative weights $l_1, \ldots, l_n\ge 0$ satisfying $\sum_{k=1}^nl_k=1$, we have
		\[
		\min_{1\le j\le n-1}\left|\sum_{k=1}^nl_k z_k-w_j\right|\le 1.
		\]
	\end{conj}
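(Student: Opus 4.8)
The plan is to run the whole argument through one classical estimate --- Schoenberg's inequality --- applied to the critical points measured from the common barycentre of $F$ and $F'$, and then to absorb the mismatch between that barycentre and the weighted one $\mu:=\sum_{k=1}^n l_k z_k$ by a single use of Cauchy--Schwarz.

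Set $c:=\frac1n\sum_{k=1}^n z_k$. A standard comparison of coefficients shows that the critical points share this barycentre, $\frac1{n-1}\sum_{j=1}^{n-1}w_j=c$, so $\sum_{j=1}^{n-1}(w_j-c)=0$; hence the cross term in the expansion of $\sum_j|w_j-\mu|^2$ vanishes and one obtains the parallel-axes identity
\begin{equation*}
\sum_{j=1}^{n-1}\bigl|w_j-\mu\bigr|^2=\sum_{j=1}^{n-1}\bigl|w_j-c\bigr|^2+(n-1)\,|c-\mu|^2 .
\end{equation*}
Dividing by $n-1$ and using that the minimum of the numbers $|w_j-\mu|^2$ is at most their mean, it suffices to control $\tfrac1{n-1}\sum_j|w_j-c|^2$ and $|c-\mu|^2$. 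For the first, Schoenberg's inequality applied to $F(z+c)$ (whose zeros $z_k-c$ sum to $0$) gives $\sum_j|w_j-c|^2\le\frac{n-2}{n}\sum_k|z_k-c|^2$, while $\sum_k|z_k-c|^2=\sum_k|z_k|^2-n|c|^2\le n(1-|c|^2)$ by $|z_k|\le1$. For the second, since $\sum_k(\frac1n-l_k)=0$ we may write $c-\mu=\sum_k(\frac1n-l_k)(z_k-c)$, and Cauchy--Schwarz yields $|c-\mu|^2\le\bigl(\sum_k l_k^2-\frac1n\bigr)\,n(1-|c|^2)$. Combining these three inputs, and simplifying through $\frac{n-2}{n-1}+n\sum_k l_k^2-1=n\sum_k l_k^2-\frac1{n-1}$, gives
\begin{equation*}
\min_{1\le j\le n-1}\Bigl|\sum_{k=1}^n l_k z_k-w_j\Bigr|^2\le(1-|c|^2)\Bigl(n\sum_{k=1}^n l_k^2-\frac1{n-1}\Bigr)\le n\sum_{k=1}^n l_k^2-\frac1{n-1}.
\end{equation*}

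For $l_k\equiv1/n$ the right-hand side is exactly $\frac{n-2}{n-1}$, which reproves --- and, through the factor $1-|c|^2$, in fact refines --- the announced bound $\gamma_n\le\sqrt{(n-2)/(n-1)}$ for the centroid radius; and more generally the inequality $\le1$ required by Schmeisser's statement falls out for every weight vector with $\sum_k l_k^2\le\frac1{n-1}$, that is, for all sufficiently ``spread'' weights.

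The real obstacle is the opposite, strongly concentrated, regime, and there the difficulty is intrinsic: for $l=\delta_{k_0}$ the estimate above degrades to $n-\frac1{n-1}$, whereas the claim to be proved, $\min_j|z_{k_0}-w_j|\le1$, is precisely Sendov's conjecture at the zero $z_{k_0}$ --- so the statement in full \emph{contains} Sendov's conjecture, and the Cauchy--Schwarz step (sharp only for collinear zeros, which is also the equality case of Schoenberg's inequality) cannot be salvaged by soft means there. The route I would pursue is: (i) dispose of $l=\delta_{k_0}$ with $|z_{k_0}|=1$ via Rubinstein's boundary theorem; (ii) establish a \emph{defect} version of the computation above, quantifying the loss in the Cauchy--Schwarz step and in $\sum_k|z_k|^2\le n$ in terms of $\max_k l_k$ and $\max_k(1-|z_k|)$, so that failure of the clean bound $\le1$ would force $l$ to concentrate near a single index $k_0$ with $z_{k_0}$ close to the unit circle; and (iii) in that localized picture, combine a Gauss--Lucas confinement of the $w_j$ with the equality analysis of Schoenberg's inequality to reduce to a perturbation of Rubinstein's case. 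Step (ii) is where I expect the real work to lie, and it is quite possible that it cannot be pushed all the way to the full conjecture, in which case the honest and unconditional deliverable of the approach is the clean weight-dependent bound displayed above --- and in particular the promised refinement of Pawłowski's result.
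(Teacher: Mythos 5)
You should note first that the statement you were asked to prove is labeled a \emph{conjecture} in the paper and is given no proof there: Schmeisser's conjecture is open, and, as you correctly observe, the point-mass case $l=\delta_{k_0}$ is exactly Sendov's conjecture, so no complete proof should be expected. Your proposal is honest about this, and what it does prove unconditionally is correct. The chain
$\min_j|w_j-\mu|^2\le\frac{1}{n-1}\sum_j|w_j-c|^2+|c-\mu|^2$, Schoenberg's inequality for the centered polynomial, the bound $\sum_k|z_k-c|^2\le n(1-|c|^2)$, and Cauchy--Schwarz with $\sum_k(\tfrac1n-l_k)^2=\sum_k l_k^2-\tfrac1n$ all check out, and the resulting bound $(1-|c|^2)\bigl(n\sum_k l_k^2-\tfrac1{n-1}\bigr)$ is arithmetically right. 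In the barycentric case $l_k\equiv 1/n$ this is precisely the paper's Theorem \ref{mt} (the same mechanism: replace the minimum by the average, use barycenter invariance of $F\mapsto F'$, and invoke Schoenberg), so on that special case your route coincides with the paper's. What your version buys beyond the paper is the interpolation: the Schmeisser bound $\le 1$ follows for every weight vector with $\sum_k l_k^2\le\frac1{n-1}$, i.e., for all sufficiently spread weights, which is a genuine (if modest) extension of the paper's Theorems \ref{1.1t} and \ref{mt}.

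The gap, which you yourself name, is that the concentrated-weight regime is untouched: steps (i)--(iii) of your program are speculative, and step (iii) in particular would have to subsume Sendov's conjecture, currently known only for degree at most $8$ and for sufficiently large degree. One smaller point worth tightening if you write this up as a partial result: the equality discussion should be kept separate from the main estimate, since your Cauchy--Schwarz step is sharp under a different alignment condition (the vector $(\tfrac1n-l_k)_k$ proportional to $(\overline{z_k-c})_k$) than the collinearity that characterizes equality in Schoenberg's inequality, so the two equality cases do not automatically coincide as your parenthetical remark suggests.
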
 
The classical Gauss-Lucas Theorem \cite[p. 18]{BE95} states that for any non-constant complex polynomial $F$, its critical points lie in the convex hull of the zeros of $F$. Building on this theorem, Pawlowski \cite{Paw98} noted that the Gauss-Lucas Theorem exactly establishes the barycenter special case of the Schmeisser Conjecture. However, Pawlowski did not supply an independent proof of this special case. To prevent potential confusion among readers, we provide one in this paper.
	\begin{thm}\label{1.1t}
	Let $ F $ be a complex polynomial of degree $ n \geq 2 $ with zeros $ z_1, \dots, z_n $ (all satisfying $ |z_k| \leq 1 $) and critical points $ w_1, \dots, w_{n-1} $. Then
		\begin{eqnarray}\label{1.1e}
			\min_{1\le j\le n-1}\left|\dfrac{\sum_{k=1}^n z_k}{n}-w_j\right|\le 1.
		\end{eqnarray}
	\end{thm}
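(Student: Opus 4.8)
The plan is to derive \eqref{1.1e} from two completely elementary ingredients, the Gauss--Lucas theorem and Vieta's formulas, with essentially no analysis beyond a one-line inequality. First I would record that, by the Gauss--Lucas theorem, every critical point $w_j$ lies in the convex hull of $z_1,\dots,z_n$; since the closed unit disk is convex and contains all the $z_k$, this gives $|w_j|\le 1$ for each $j$. Next, expanding $F(z)=z^n-\bigl(\sum_{k=1}^n z_k\bigr)z^{n-1}+\cdots$ and differentiating, I would apply Vieta's formula to the monic polynomial $\frac1n F'(z)$, whose roots (with multiplicity) are exactly $w_1,\dots,w_{n-1}$ and which genuinely has degree $n-1\ge 1$ because $n\ge 2$; this yields $\sum_{j=1}^{n-1}w_j=\frac{n-1}{n}\sum_{k=1}^n z_k$. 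Consequently the centroid $c:=\frac1n\sum_{k=1}^n z_k$ of the zeros coincides with the centroid $\frac1{n-1}\sum_{j=1}^{n-1}w_j$ of the critical points.

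The remaining step is a short planar estimate. Rotating the whole picture about the origin changes neither $|w_j|$ nor $|c-w_j|$, so I may assume that $c\ge 0$ is a nonnegative real number. Taking real parts in $c=\frac1{n-1}\sum_{j=1}^{n-1}w_j$ exhibits $c$ as the arithmetic mean of $\re w_1,\dots,\re w_{n-1}$, so there is an index $j_0$ with $\re w_{j_0}\ge c\ge 0$; geometrically, $w_{j_0}$ lies on the far side of the perpendicular bisector of the segment $[0,c]$ and is therefore at least as close to $c$ as to the origin. Concretely, I would expand
\[
|c-w_{j_0}|^2=|w_{j_0}|^2-2c\,\re w_{j_0}+c^2\le |w_{j_0}|^2-2c^2+c^2=|w_{j_0}|^2-c^2\le |w_{j_0}|^2\le 1,
\]
using $\re w_{j_0}\ge c$ together with $c\ge 0$ for the first inequality and $|w_{j_0}|\le 1$ for the last. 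This gives $\min_{1\le j\le n-1}|c-w_j|\le |c-w_{j_0}|\le 1$, which is precisely \eqref{1.1e}.

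I do not anticipate a genuine obstacle: the theorem is, in effect, ``Gauss--Lucas plus bookkeeping''. The two points that deserve a moment's care are the Vieta identity identifying the two centroids (legitimate exactly because $n\ge 2$, so $F'$ is a true degree-$(n-1)$ polynomial) and the degenerate case $c=0$, which needs no separate treatment since then the displayed chain simply reads $|c-w_{j_0}|=|w_{j_0}|\le 1$ for any critical point $w_{j_0}$ with $\re w_{j_0}\ge 0$. One could also note that the constant $1$ obtained here is not optimal and is superseded by the main theorem of the paper, but that refinement lies outside the scope of this warm-up statement.
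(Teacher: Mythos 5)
Your proof is correct, but it takes a genuinely different route from the paper's. The paper argues in an $L^2$ (variance) spirit: it bounds the \emph{sum} $\sum_{j=1}^{n-1}|c-w_j|^2$, expands it via the barycenter identity to get $\sum_j|w_j|^2-(n-1)|c|^2\le\sum_j|w_j|^2\le n-1$ (the last step from Gauss--Lucas), and then concludes by pigeonhole that the smallest term is at most $1$. You instead apply the pigeonhole one step earlier: after rotating so that $c\ge 0$, the barycenter identity exhibits $c$ as the mean of $\re w_1,\dots,\re w_{n-1}$, so some $w_{j_0}$ has $\re w_{j_0}\ge c$; you then estimate the \emph{single} distance $|c-w_{j_0}|^2\le|w_{j_0}|^2-c^2\le 1$. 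Both arguments use exactly the same two inputs (Gauss--Lucas and $\frac1{n-1}\sum_j w_j=\frac1n\sum_k z_k$) and both actually yield the slightly sharper bound $\min_j|c-w_j|^2\le 1-|c|^2$, so neither is stronger; the trade-off is that the paper's averaging-of-squares identity is the template that gets upgraded via Schoenberg's inequality in the main Theorem~\ref{mt}, whereas your pointwise geometric selection is arguably more elementary and self-contained but does not feed as directly into the later $L^2$ machinery.
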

	\begin{proof}
		It is sufficient to show
		\begin{eqnarray}\label{e1.1}
			\sum_{j=1}^{n-1}\left|\dfrac{\sum_{k=1}^n z_k}{n}-w_j\right|^2\le n-1.
		\end{eqnarray}
		Compute
		\begin{eqnarray*}
			\text{LHS}_{(\ref{e1.1})}&=& \sum_{j=1}^{n-1}\left|\dfrac{\sum_{k=1}^n z_k}{n}\right|^2-2\sum_{j=1}^{n-1}\overline{w_j} \dfrac{\sum_{k=1}^{n} z_k}{n}+\sum_{j=1}^{n-1}\left|w_j\right|^2\\
			&=&\sum_{j=1}^{n-1}\left|w_j\right|^2-(n-1)\left|\dfrac{\sum_{k=1}^n z_k}{n}\right|^2 \\
			&&\text{(Barycenter invariance gives  that $\tfrac{\sum_{j=1}^{n-1}w_j}{n-1}=\tfrac{\sum_{k=1}^{n}z_k}{n}$)}\\
			&\le&\sum_{j=1}^{n-1}\left|w_j\right|^2\\
			&\le& n-1. \\
			&& \text{(Since all $\left|z_k\right|\le 1$, by Gauss-Lucas Theorem)}
		\end{eqnarray*}
	\end{proof}
Pawlowski's result \cite{Paw98} shows that the radius 1 on the right-hand side of (\ref{1.1e}) can be replaced by a smaller radius which depends only on the degree of $F$.
	\begin{thm}\label{pawt}
Let $ F $ be a complex polynomial of degree $ n \geq 2 $ with zeros $ z_1, \dots, z_n $ (all satisfying $ |z_k| \leq 1 $) and critical points $ w_1, \dots, w_{n-1} $.  Assume that $\gamma_n$ is the radius of the smallest concentric disk centered at $\tfrac{1}{n} \sum_{k=1}^n z_k$ containing at least one critical point of $F$. Then
$$ n^{-\frac{1}{n-1}} \le \gamma_n \le \frac{2 n^{\frac{1}{n-1}}} {n^{\frac{2}{n-1}} + 1}. $$
Moreover, $\gamma_n = \frac{1}{3}$ if  $n=3$ or  all zeros of $F(z)$ are real.
	\end{thm}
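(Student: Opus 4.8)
The plan is to establish the three assertions separately. Throughout, for $F$ of degree $n$ with zeros $z_1,\dots,z_n$ I write $c=\tfrac1n\sum_k z_k$ for the centroid and $w_1,\dots,w_{n-1}$ for the critical points, and I use the elementary fact (from comparing the coefficients of $F$ and $F'$) that $\sum_j w_j=(n-1)c$, so $F$ and $F'$ share the centroid $c$. For the lower bound I would exhibit the explicit polynomial $F_0(z)=z^n-z=z\prod_{j=1}^{n-1}\!\bigl(z-e^{2\pi ij/(n-1)}\bigr)$, whose zeros — namely $0$ together with the $(n-1)$-st roots of unity — all satisfy $|z_k|\le 1$. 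Its centroid is $0$, while $F_0'(z)=nz^{n-1}-1$ has for its zeros exactly the $(n-1)$-st roots of $\tfrac1n$, each at distance $n^{-1/(n-1)}$ from the centroid; hence the smallest concentric disk about the centroid of $F_0$ meeting the critical set has radius $n^{-1/(n-1)}$, forcing $\gamma_n\ge n^{-1/(n-1)}$. (For $n=2$ the unique critical point of a quadratic is its centroid, so the content is understood for $n\ge3$.)

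For the upper bound, fix an admissible $F$; after rotating we may assume $c=|c|\in[0,1)$ (if $|c|=1$ then $F=(z-c)^n$ and $c$ is a critical point). I would combine two complementary estimates. First, a variance estimate using Gauss--Lucas ($|w_j|\le1$):
\[
\min_{1\le j\le n-1}|c-w_j|^2\ \le\ \frac1{n-1}\sum_{j=1}^{n-1}|c-w_j|^2\ =\ \frac1{n-1}\Bigl(\sum_j|w_j|^2-(n-1)|c|^2\Bigr)\ \le\ 1-|c|^2 .
\]
Second, a product estimate: from $F'(z)=n\prod_j(z-w_j)$,
\[
\min_{1\le j\le n-1}|c-w_j|^{\,n-1}\ \le\ \prod_{j=1}^{n-1}|c-w_j|\ =\ \frac{|F'(c)|}{n},
\]
and, \emph{granting} the inequality $|F'(c)|\le(1+|c|)^{n-1}$, this gives $\min_j|c-w_j|\le(1+|c|)\,n^{-1/(n-1)}$. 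With $t:=n^{1/(n-1)}>1$ and $x:=|c|$, the two bounds read $\sqrt{1-x^2}$ (decreasing) and $(1+x)/t$ (increasing); their minimum is maximised where they agree, namely at $x=\tfrac{t^2-1}{t^2+1}$, with common value $\tfrac{2t}{t^2+1}=\tfrac{2n^{1/(n-1)}}{n^{2/(n-1)}+1}$. Since this bounds $\min_j|c-w_j|$ for every admissible $F$, it bounds $\gamma_n$.

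The crux — the step I expect to be the main obstacle — is the inequality $|F'(c)|\le(1+|c|)^{n-1}$. Putting $u_k=c-z_k$ we have $\sum_k u_k=0$, $|u_k|\le 1+|c|=:r$, and $F'(c)=\sum_k\prod_{l\ne k}u_l=e_{n-1}(u_1,\dots,u_n)$; so the lemma is purely algebraic: \emph{if $\sum_k u_k=0$ and $|u_k|\le r$, then $|e_{n-1}(u)|\le r^{\,n-1}$}. I would argue: (i) if some $u_k=0$ then $e_{n-1}(u)=\prod_{l\ne k}u_l$ and the bound is immediate; (ii) otherwise $e_{n-1}(u)=\bigl(\prod_k u_k\bigr)\sum_k u_k^{-1}$, and since $\sum_k\overline{u_k}=0$ one may write $\sum_k u_k^{-1}=\sum_k\overline{u_k}\bigl(|u_k|^{-2}-r^{-2}\bigr)$, which yields $|e_{n-1}(u)|\le\bigl(\prod_k\rho_k\bigr)\sum_k\bigl(\rho_k^{-1}-r^{-2}\rho_k\bigr)$ with $\rho_k=|u_k|\in(0,r]$; after the scaling $\rho_k=r\sigma_k$ everything reduces to the scalar inequality
\[
\Bigl(\prod_{k=1}^n\sigma_k\Bigr)\sum_{k=1}^n\Bigl(\frac1{\sigma_k}-\sigma_k\Bigr)\ =\ \sum_{k=1}^n(1-\sigma_k^2)\prod_{l\ne k}\sigma_l\ \le\ 1\qquad(\sigma_k\in[0,1]).
\]
This last inequality I would prove either by induction on $n$ — splitting off a variable turns it into the one-dimensional problem $\max_{t\in[0,1]}\bigl[(1-t^2)P'+tg'\bigr]$, which needs a suitably sharp joint control of $g'=\sum_{k<n}(1-\sigma_k^2)\prod_{l\ne k,\,l<n}\sigma_l$ and $P'=\prod_{l<n}\sigma_l$ — or by a direct boundary/Lagrange analysis locating the maximum at $\sigma=(1,\dots,1,0)$. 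Equality holds throughout for $F_0(z)=z^n-z$ (where $|F_0'(0)|=1=(1+0)^{n-1}$), a reassuring check.

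Finally, for the special cases one can be completely explicit. For $n=3$ the two critical points are symmetric about the centroid, $w_{1,2}=c\pm\delta$; comparing $F'(z)=3(z-c)^2-3\delta^2$ with $F'(z)=3z^2-2\bigl(\sum_k z_k\bigr)z+\sum_{i<j}z_iz_j$ gives $\delta^2=c^2-\tfrac13\sum_{i<j}z_iz_j=\tfrac1{18}\sum_{i<j}(z_i-z_j)^2$, so $\gamma_3^2=\tfrac1{18}\max\bigl|\sum_{i<j}(z_i-z_j)^2\bigr|$ over $|z_k|\le1$; a brief extremal analysis shows this maximum is attained at a two-against-one antipodal configuration, i.e.\ by $F(z)=(z+1)^2(z-1)$, which pins down $\gamma_3$ and displays an extremal polynomial. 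For $F$ with all zeros real, the critical points strictly interlace the zeros on $\mathbb{R}$, and I would combine this interlacing with a rearrangement/monotonicity argument showing that moving the zeros to the boundary configuration $F(z)=(z+1)^{n-1}(z-1)$ does not decrease the distance from the centroid to the nearest critical point, thereby pinning down the sharp constant. I expect the real-zero case to be the most computational but the least conceptually delicate, and the algebraic lemma above to be the genuine obstacle.
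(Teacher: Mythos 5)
The paper does not prove this theorem: it cites it from Pawlowski's 1998 paper \cite{Paw98} and only sketches the lower bound via the example $p(z)=z^n-z$. So there is no ``paper's own proof'' here to compare against line by line. Your reconstruction of the lower bound via $F_0(z)=z^n-z$ (centroid $0$, critical points on $|z|=n^{-1/(n-1)}$) is correct and matches the paper's remark; and your two-estimate strategy for the upper bound --- the variance bound $\min_j|c-w_j|\le\sqrt{1-|c|^2}$ from Gauss--Lucas and barycenter invariance, combined with the product bound $\min_j|c-w_j|\le(|F'(c)|/n)^{1/(n-1)}$ and balancing the two at $|c|=\tfrac{t^2-1}{t^2+1}$ --- is a natural route that reproduces exactly the constant $\tfrac{2t}{t^2+1}$ and is very close in spirit to Pawlowski's original argument.

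The genuine gap is the step you yourself flag: the inequality $|F'(c)|\le(1+|c|)^{n-1}$, equivalently (after your correct reduction via $u_k=c-z_k$, $\sum u_k=0$, and the identity $e_{n-1}(u)=(\prod u_k)\sum u_k^{-1}$ plus the zero-sum trick and the triangle inequality) the scalar inequality
\[
\sum_{k=1}^n(1-\sigma_k^2)\prod_{l\ne k}\sigma_l\;\le\;1,\qquad \sigma_k\in[0,1].
\]
You state two possible strategies (induction on $n$, or a boundary/Lagrange analysis) but carry out neither, and the induction you gesture at is not straightforward: after maximizing over $\sigma_n=t\in[0,1]$ one is left, in the regime $T:=\sum_{k<n}(\sigma_k^{-1}-\sigma_k)<2$, with the inequality $P(1+T^2/4)\le 1$ where $P=\prod_{k<n}\sigma_k$, and this does not follow from the inductive hypothesis $PT\le 1$ alone; it requires its own argument relating $P$ and $T$. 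Until this is closed the upper bound is unproved. The two secondary assertions ($\gamma_3=\tfrac13$ and the all-real-zeros case) are likewise only sketched; the interlacing/rearrangement argument for real zeros is plausible but is stated as a plan rather than a proof. Finally, note that for $n=2$ every quadratic has its unique critical point at the centroid, so $\gamma_2=0$ and the stated lower bound $n^{-1/(n-1)}=\tfrac12$ fails --- the theorem is really for $n\ge 3$; you note this in passing, but it deserves to be stated explicitly since the theorem as written says $n\ge2$.
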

We note that the bound in \cite[Theorem 3.3]{Paw98} contains a typographical error: the correct inequality is $\gamma_n \le \tfrac{2n^{\frac{1}{n-1}}}{n^{\frac{2}{n-1}} + 1}$ rather than $\tfrac{2n^{\frac{1}{n-1}}}{n^{\frac{2}{n+1}} + 1}$. Pawlowski \cite{Paw98} observed that the sharp lower bound $n^{-\frac{1}{n-1}} \le \gamma_n$ follows from the special case $p(z) = z^n - z$ for $n\ge 3$. Specifically, Vieta's formulas show that the barycenter of the zeros of $p(z)$ is at 0. Furthermore, $p'(z) = nz^{n-1} - 1$, and each critical point $\zeta_k$ has modulus $n^{-\frac{1}{n-1}}$. However, the upper bound $\gamma_n \le \tfrac{2n^{\frac{1}{n-1}}}{n^{\frac{2}{n-1}} + 1}$ does not appear to be sharp, as $n^{-\frac{1}{n-1}}$ is  asymptotically equivalent to $1 - \tfrac{\log n}{n}$ but
	\begin{eqnarray*}
	 \tfrac{2n^\frac{1}{n-1}}{n^\frac{2}{n-1}+1}&=&1-\dfrac{(n^{\frac{1}{n-1}}-1)^2}{n^\frac{2}{n-1}+1}\\
		&=&1-\dfrac{(\exp(\tfrac{\log n}{n-1})-1)^2}{n^\frac{2}{n-1}+1}\\
		&\sim&1-\dfrac{\left( \tfrac{\log n}{n-1}\right) ^2}{2}\\
		&\sim&1-\dfrac{1}{2}\left( \dfrac{\log n}{n}\right)^2,
	\end{eqnarray*}
where $\sim$ denotes asymptotic equivalence as $n \to \infty$. Based on these observations, Pawlowski \cite{Paw98} conjectured that
	\begin{conj}\label{pawc}
	A sharp upper bound for $\gamma_n$ should be asymptotically of the form $\gamma_n \le 1 - c \frac{\log n}{n}$ for some positive constant $c$.
	\end{conj}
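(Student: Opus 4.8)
\emph{Proof proposal for Conjecture \ref{pawc}.} Write $c_0=\tfrac1n\sum_{k=1}^n z_k$ for the centroid of the zeros. The plan is to bound the \emph{geometric} mean of the distances $|w_j-c_0|$ rather than the minimum directly, since $\min_{j}|w_j-c_0|\le\bigl(\prod_{j}|w_j-c_0|\bigr)^{1/(n-1)}$. As $F$ is monic, $F'(z)=n\prod_{j=1}^{n-1}(z-w_j)$, so $\prod_{j=1}^{n-1}|w_j-c_0|=\tfrac1n|F'(c_0)|$ and therefore
\[
\gamma_n\ \le\ \Bigl(\tfrac1n\,M_n\Bigr)^{1/(n-1)},\qquad M_n:=\sup\bigl\{\,|F'(c_0)| : F\text{ monic},\ \deg F=n,\ \text{all zeros in }\overline{\mathbb D}\,\bigr\}.
\]
Consequently a \emph{polynomial} bound $M_n\le C\,n^{1-\varepsilon}$ with a fixed $\varepsilon>0$ proves the conjecture: it gives $\gamma_n\le(Cn^{-\varepsilon})^{1/(n-1)}=1-\varepsilon\tfrac{\log n}{n}\bigl(1+o(1)\bigr)$, exactly the predicted form (with $c=\varepsilon$). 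One should aim for $M_n=O(\sqrt n)$: the two-cluster polynomials $(z-1)^a(z+1)^{n-a}$ have centroid $\tfrac{2a-n}{n}$, and a direct computation gives $|F'(c_0)|=n2^{n-1}|2\alpha-1|(1-\alpha)^{\alpha n-1}\alpha^{(1-\alpha)n-1}$ with $\alpha=a/n$, which is $\Theta(\sqrt n)$ when $\alpha=\tfrac12+\Theta(n^{-1/2})$; so $M_n=\Theta(\sqrt n)$ would pin down $c=\tfrac12$.

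Next I would cast $M_n$ into a Schoenberg-type estimate. Put $\eta_k:=c_0-z_k$; then $\sum_k\eta_k=0$, each $\eta_k$ lies in the closed unit disk $\overline{D(c_0,1)}$, and $\sum_k|\eta_k|^2=\sum_k|z_k|^2-n|c_0|^2\le n$. Differentiating $F=\prod_k(z-z_k)$ gives $F'(c_0)=\sum_k\prod_{l\ne k}\eta_l=e_{n-1}(\eta_1,\dots,\eta_n)=\bigl(\prod_k\eta_k\bigr)\sum_k\eta_k^{-1}$, and then the identity $\sum_k\eta_k^{-1}=\sum_k\overline{\eta_k}\,|\eta_k|^{-2}=\sum_k\overline{\eta_k}\,\tfrac{1-|\eta_k|^2}{|\eta_k|^2}$ --- the last equality precisely because $\sum_k\overline{\eta_k}=0$ --- combined with the triangle inequality yields
\[
|F'(c_0)|\ \le\ \sum_{k=1}^n\bigl|\,1-|\eta_k|^2\,\bigr|\prod_{l\ne k}|\eta_l|
\]
(the degenerate case $\eta_k=0$, i.e.\ a zero at the centroid, is handled by continuity: one vanishing $\eta_k$ gives $|F'(c_0)|=\prod_{l\ne k}|\eta_l|\le\sqrt e$ by AM--GM, two or more give $0$). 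When $c_0=0$ one has $|\eta_k|\le1$ for all $k$, and then the purely elementary inequality
\[
\sum_{k=1}^n(1-r_k^2)\prod_{l\ne k}r_l\ \le\ 1\qquad\text{for }r_k\in[0,1]
\]
---equivalently, with $t_k=1/r_k$, that $\sum_k(t_k-t_k^{-1})\le\prod_k t_k$ on $[1,\infty)^n$, which follows by induction on $n$ (one shows that at any point of $[1,\infty)^n$ where some partial derivative of $\prod_k t_k-\sum_k(t_k-t_k^{-1})$ vanishes the function is already positive, and each face $t_j=1$ drops to dimension $n-1$)---yields $|F'(0)|\le1$, i.e.\ $\gamma_n\le n^{-1/(n-1)}$ for barycentre-zero polynomials. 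This is the ``rare application of Schoenberg's inequality'' promised in the abstract; the entire difficulty of the conjecture lies in passing to an arbitrary centroid.

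For general $c_0$ the sub-unit moduli $|\eta_k|\le1$ are still controlled by the same elementary inequality, so the problem reduces to bounding the contribution of the indices with $|\eta_k|>1$. The point is that one \emph{cannot} simply relax to the moment constraint $\sum_k|\eta_k|^2\le n$: that relaxation admits vectors $(|\eta_k|)$ for which $\sum_k|1-|\eta_k|^2|\prod_{l\ne k}|\eta_l|$ is of order $n$, but such vectors are not realisable as $\eta_k=c_0-z_k$ with $z_k\in\overline{\mathbb D}$ --- crucially, $\sum_k\operatorname{Re}(\overline{c_0}\,\eta_k)=n|c_0|^2-\operatorname{Re}(\overline{c_0}\sum z_k)=0$, so the $\eta_k$ cannot be ``all on one side'' of $c_0$, which forces the large $|\eta_k|$ to be nearly balanced. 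The intended route: combine $\prod_k|\eta_k|=|F(c_0)|\le1$ (from $\log x\le x-1$) with the sharper constraint $\sum_k|\eta_k|^2\le n(1-|c_0|^2)$ and with the above ``no one-sidedness'' relation; a convexity/entropy estimate then confines $(|\eta_k|^2)$ to an $\ell^2$-neighbourhood of $(1,\dots,1)$ of the right size and bounds the supra-unit indices' contribution by $O(\sqrt n)$. Equivalently, a Lagrange-multiplier analysis should show that the true maximiser is a balanced two-level configuration $|\eta_k|\in\{\beta,\gamma\}$, on which $\sum_k|1-|\eta_k|^2|\prod_{l\ne k}|\eta_l|$ is computed explicitly and equals $\Theta(\sqrt n)$ --- the value realised by $(z-1)^a(z+1)^{n-a}$.

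The main obstacle is exactly this uniform estimate $M_n=O(\sqrt n)$ (or even just $O(n^{1-\varepsilon})$). Soft arguments break down in two ways simultaneously: dropping the disk/centroid geometry in favour of the moment constraint admits fake configurations that are \emph{linear} in $n$, while splitting $\sum_k|1-|\eta_k|^2|\prod_{l\ne k}|\eta_l|$ crudely at $|\eta_k|=1$ discards factors $2^{\#\{k:\,|\eta_k|>1\}}$ that are exponentially large on the near-extremal examples (which have $\approx n/2$ moduli just above $1$). Thus one needs a genuinely sharp optimisation, not merely an estimate. A potential-theoretic heuristic corroborates $M_n=\Theta(\sqrt n)$: for a limiting zero distribution $\mu$ one has $|F'(c_0)|\approx n\,|C^\mu(c_0)|\,e^{nU^\mu(c_0)}$ with $U^\mu(c_0)=\int\log|c_0-w|\,d\mu(w)\le\tfrac12\log\!\bigl(\int|c_0-w|^2 d\mu\bigr)\le0$ by Jensen's inequality, so any gain from a large Cauchy transform $C^\mu(c_0)$ is offset by exponential decay coming from $U^\mu(c_0)<0$, the trade-off optimising at the $\sqrt n$ scale --- but making this rigorous is precisely the open point.
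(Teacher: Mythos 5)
The statement you are attempting is Conjecture \ref{pawc}, which the paper does not prove: it is Pawlowski's open conjecture, restated here, and the paper explicitly concludes (after Theorem \ref{mt1}) that its own bound $\sqrt{\tfrac{n-2}{n-1}}\sim 1-\tfrac{1}{2n}$ falls short of the conjectured $1-c\tfrac{\log n}{n}$ rate, so that ``Conjecture \ref{pawc} remains open.'' There is therefore no proof in the paper to compare against, and your submission, by its own admission, is not a proof either. Your reduction is sound as far as it goes: the geometric-mean bound $\min_j|w_j-c_0|\le\bigl(\tfrac1n|F'(c_0)|\bigr)^{1/(n-1)}$ is correct, and it is true that a uniform estimate $M_n=\sup|F'(c_0)|=O(n^{1-\varepsilon})$ would yield $\gamma_n\le 1-\varepsilon\tfrac{\log n}{n}(1+o(1))$; your computation on $(z-1)^a(z+1)^{n-a}$ showing $|F'(c_0)|=\Theta(\sqrt n)$ for $a/n=\tfrac12+\Theta(n^{-1/2})$ also checks out. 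But the entire content of the conjecture now sits in the unproven claim $M_n=O(\sqrt n)$ (or even $O(n^{1-\varepsilon})$), which you yourself flag as ``precisely the open point.'' The paragraphs on confining $(|\eta_k|^2)$ to an $\ell^2$-neighbourhood of $(1,\dots,1)$ and on the Lagrange-multiplier/potential-theoretic heuristics are programmatic, not arguments; nothing there can be checked or completed as written.

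Two further cautions. First, even the self-contained pieces are shakier than you present them: the ``purely elementary inequality'' $\sum_k(1-r_k^2)\prod_{l\ne k}r_l\le 1$ on $[0,1]^n$ is asserted with a one-line induction sketch that is not a proof (though the inequality itself appears to be true and would give the clean statement $|F'(0)|\le 1$ for centroid-zero polynomials, matching the extremal $z^n-z$); if you can nail it down, that is a nice self-contained lemma worth isolating. Second, you misattribute your $c_0=0$ argument as ``the rare application of Schoenberg's inequality promised in the abstract'' --- the paper's use of Schoenberg's inequality \eqref{schi} is entirely different (it bounds the \emph{arithmetic} mean $\tfrac{1}{n-1}\sum_j|w_j-G|^2$ in the proof of Theorem \ref{mt}, yielding the weaker rate $1-\tfrac{1}{2n}$), and your identity $F'(c_0)=\bigl(\prod_k\eta_k\bigr)\sum_k\eta_k^{-1}$ does not invoke Schoenberg at all. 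In short: interesting and partly verifiable strategy, but the conjecture is not proved, and the missing step is the whole difficulty.
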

	
We focus on this interesting problem in the present paper. In the next section, we refine the upper bound for $\gamma_n$ using techniques inspired by the Borcea variance conjectures and Schoenberg's inequality.
	\section{Main results}\label{s-2}
	
Schoenberg's conjecture \cite{Scho86} proposes a polynomial analogue of Rolle's theorem relating the zeros and critical points of polynomials.
\begin{conj}[Schoenberg Conjecture] \label{SchC} Let $ F $ be a complex polynomial of degree $ n \geq 2 $ with zeros $ z_1, \dots, z_n $  and critical points $ w_1, \dots, w_{n-1} $. Then
		\begin{eqnarray}\label{schi}
		\sum_{j=1}^{n-1}\left|w_j\right|^2\le \dfrac{1}{n^2}\left|\sum_{k=1}^n z_k\right|^2+\dfrac{n-2}{n}\sum_{k=1}^n\left|z_k\right|^2,
	\end{eqnarray}
where equality holds iff  all $z_1, \ldots, z_n$ are collinear in the complex plane.
\end{conj}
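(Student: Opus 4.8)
The plan is to prove \eqref{schi} by representing the critical points of $F$ as the eigenvalues of a matrix compression and then applying Schur's inequality. Set $D=\diag(z_1,\dots,z_n)$, let $\mathbf 1=(1,\dots,1)^{\mathsf T}\in\mathbb C^n$, and let $P$ denote the orthogonal projection of $\mathbb C^n$ onto the hyperplane $\mathbf 1^{\perp}$. The engine of the argument is the theorem of Pereira and Malamud: since $D$ is normal, the compression $M:=PDP$, regarded as a linear operator on the $(n-1)$-dimensional space $\mathbf 1^{\perp}$, satisfies $\det(zI-M)=\tfrac1n F'(z)$; equivalently, the eigenvalues of $M$, listed with multiplicity, are exactly $w_1,\dots,w_{n-1}$. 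I would either cite this result or supply the short verification comparing the coefficients of $F'$ with those of $\det(zI-M)$ via Newton's identities.

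Granting this, the next step is Schur's inequality $\sum_{j=1}^{n-1}\left|\lambda_j(M)\right|^2\le \tr(M^{*}M)$, valid for every square matrix, with equality precisely when $M$ is normal; this is read off the Schur triangularization $M=U(\Lambda+N)U^{*}$ with $\Lambda=\diag(\lambda_j(M))$ and $N$ strictly upper triangular, since it gives $\tr(M^{*}M)=\sum_{j}|\lambda_j(M)|^2+\tr(N^{*}N)$. Hence $\sum_{j=1}^{n-1}|w_j|^2\le \tr(M^{*}M)$, where the trace is taken on $\mathbf 1^{\perp}$; equivalently it is the trace on $\mathbb C^n$ of $(PDP)^{*}(PDP)=PD^{*}PDP$, which vanishes on $\mathrm{span}(\mathbf 1)$. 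It then remains to evaluate this trace. Writing $P=I-vv^{*}$ with $v=\mathbf 1/\sqrt n$ and expanding, a one-line computation yields
\[
\tr(M^{*}M)=\tr(D^{*}D)-\|Dv\|^2-\|D^{*}v\|^2+\left|v^{*}Dv\right|^2,
\]
where $\tr(D^{*}D)=\sum_{k}|z_k|^2$. Since $Dv=\tfrac1{\sqrt n}(z_1,\dots,z_n)^{\mathsf T}$ and $v^{*}Dv=\tfrac1n\sum_k z_k$, the right-hand side equals $\sum_k|z_k|^2-\tfrac2n\sum_k|z_k|^2+\tfrac1{n^2}\bigl|\sum_k z_k\bigr|^2$, which is precisely the right-hand side of \eqref{schi}. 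Combining the three steps proves the inequality, with no normalization of the centroid required, the right-hand side of \eqref{schi} being literally $\tr(M^{*}M)$.

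For the equality statement, equality in \eqref{schi} holds if and only if equality holds in Schur's inequality, i.e.\ if and only if $M$ is normal. If $z_1,\dots,z_n$ are collinear, a rotation (under which \eqref{schi} is invariant) reduces to $z_k=a_k+ib$ with $a_k$ real and $b$ a fixed real number; then $M=P\diag(a_1,\dots,a_n)P+ibI_{n-1}$ is a Hermitian matrix plus $ibI_{n-1}$, hence normal, so equality holds. Conversely, writing $D=A+iB$ with $A=\diag(a_k)$, $B=\diag(b_k)$ real, the operator $M=PAP+iPBP$ on $\mathbf 1^{\perp}$ is normal if and only if the real symmetric operators $M_A:=PAP$ and $M_B:=PBP$ commute, hence admit a common orthonormal eigenbasis $\{u_i\}$ of $\mathbf 1^{\perp}$. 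From $M_Au_i=\lambda_iu_i$ and $u_i\in\mathbf 1^{\perp}$ one gets $(A-\lambda_iI)u_i\in\mathrm{span}(v)$, so $u_i\parallel(A-\lambda_iI)^{-1}v$ whenever $\lambda_i\notin\{a_1,\dots,a_n\}$, and likewise $u_i\parallel(B-\mu_iI)^{-1}v$; comparing coordinates forces $b_k-\mu_i=c_i(a_k-\lambda_i)$ for all $k$ and some constant $c_i$, so all the points $(a_k,b_k)$ lie on one line, which is collinearity.

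I expect the main obstacle to be this converse, where one must dispose of the degenerate situations in which some $\lambda_i$ equals an $a_k$ (equivalently, some real part is repeated, e.g.\ when $F$ has a multiple root), since then the corresponding eigenvector of $M_A$ need not be of the form $(A-\lambda_iI)^{-1}v$; this can be handled either by a perturbation argument, approximating $F$ by polynomials with simple roots and distinct real parts, or by a direct analysis of the relevant $M_A$- and $M_B$-invariant subspaces. Apart from that, the one substantive input used as a black box is the Pereira--Malamud compression theorem, which carries the real content of the argument.
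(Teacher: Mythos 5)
First, a point of reference: the paper offers no proof of this statement at all --- it is recorded as a (now settled) conjecture and attributed to Pereira and Malamud --- so your proposal is measured against the known proofs rather than anything in the text. What you describe is essentially Pereira's differentiator argument, and the inequality half of it is correct and complete. The compression theorem $\det(zI-M)=\tfrac1n F'(z)$ for $M=PDP|_{\mathbf 1^\perp}$ is the genuine content and is fair to cite (the paper itself invokes \eqref{schi} only by citation); Schur's inequality is applied correctly to the $(n-1)\times(n-1)$ operator; and the trace identity $\tr(M^*M)=\tr(D^*D)-\|Dv\|^2-\|D^*v\|^2+|v^*Dv|^2=\tfrac{n-2}{n}\sum_k|z_k|^2+\tfrac1{n^2}\bigl|\sum_k z_k\bigr|^2$ checks out, as does the forward equality direction (collinear $\Rightarrow$ $M$ is Hermitian plus $ib I$ after a rotation, hence normal).

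The genuine gap is the one you flag yourself, the converse of the equality case, and of your two proposed repairs the perturbation argument cannot work: normality of $M$ is the hypothesis, not the conclusion, and there is no reason the compressions attached to nearby polynomials with simple roots and distinct real parts remain normal, so approximating $F$ buys nothing. The direct analysis, however, does close the gap and is short, so you should carry it out. For a common unit eigenvector $u$ of $M_A$ and $M_B$ one has $(A-\lambda I)u=(v^*Au)v$ and $(B-\mu I)u=(v^*Bu)v$. If $v^*Au\neq0$ then every coordinate of $u$ is nonzero (the $k$-th coordinate of $(A-\lambda I)u$ is $(a_k-\lambda)u_k$ and must equal the nonzero constant $(v^*Au)/\sqrt n$); so if also $v^*Bu\neq0$, dividing the two relations coordinatewise gives $b_k-\mu=c\,(a_k-\lambda)$ for \emph{all} $k$, i.e.\ collinearity. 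If $v^*Au=0\neq v^*Bu$ then $u$ is a full-support eigenvector of the diagonal matrix $A$, forcing all $a_k$ equal (a vertical line), and symmetrically in the other mixed case. Finally, if $v^*Au=v^*Bu=0$ for every vector of the common eigenbasis, then $\mathbf 1^\perp$ is spanned by eigenvectors of $D$; since the eigenspace of $D$ for a root of multiplicity $m$ meets $\mathbf 1^\perp$ in dimension $m-1$, these intersections have total dimension $n-r$ with $r$ the number of distinct roots, and $n-r\ge n-1$ forces $r=1$, i.e.\ all roots coincide. With that case analysis supplied in place of the perturbation suggestion, the proof is complete.
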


Conjecture \ref{schi} was confirmed independently by Pereira \cite{Per03} and Malamud \cite{Mal04}. It was subsequently reproved by Cheung–Ng \cite{CN06} and Kushel–Tyaglov \cite{KT16} using distinct approaches. For studies on fourth-order Schoenberg inequalities, see \cite{BS97,CN06,KT16}.

Let $ F $ be a complex polynomial of degree $ n \geq 2 $ with zeros $ z_1, \dots, z_n $. Define the $p$-variance of the zeros of $F$ by
$$ \sigma_p(F) = \min_{c \in \mathbb{C}} \left( \frac{1}{n} \sum_{k=1}^n |z_k - c|^p \right)^{\frac{1}{p}}. $$
We focus on particular cases: $\sigma_{1}(F)$ is the mean deviation, $\sigma_2(F)$ is the variance, and we define $\sigma_\infty(F)$ as
$$ \sigma_{\infty}(F) = \lim_{p\to\infty} \sigma_p(F) = \min_{c \in \mathbb{C}} \max_{1 \le k \le n} |z_k - c|. $$
Khavinson et al. \cite[Lemma 2.1]{KPPSS11} proved that $\sigma_{p}(F)$ is non-decreasing in $p$ for any fixed polynomial $F(z)$.
	\begin{lem}\label{kle} For every polynomial $F$ and $0<p<q<\infty$, we have
		\[
		\sigma_p(F)\le \sigma_q(F)\le \sigma_\infty(F).
		\]	
	\end{lem}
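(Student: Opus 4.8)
The plan is to reduce the monotonicity of $\sigma_p(F)$ in $p$ to the classical power mean inequality applied to the fixed tuple of distances from an optimal center. First I would record the elementary fact that for any nonnegative reals $a_1,\dots,a_n$ and any $0<p<q<\infty$ one has $\left(\frac1n\sum_{k=1}^n a_k^p\right)^{1/p}\le\left(\frac1n\sum_{k=1}^n a_k^q\right)^{1/q}$; this follows from Jensen's inequality applied to the convex function $t\mapsto t^{q/p}$ on $[0,\infty)$ (convexity being exactly the condition $q/p\ge 1$).

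Next I would fix $q$ and observe that the map $c\mapsto\frac1n\sum_{k=1}^n|z_k-c|^q$ is continuous and tends to $+\infty$ as $|c|\to\infty$, hence attains its minimum at some $c_q\in\mathbb{C}$, so that $\sigma_q(F)=\left(\frac1n\sum_{k=1}^n|z_k-c_q|^q\right)^{1/q}$. (If one prefers to avoid attainment, an $\varepsilon$-approximate minimizer works equally well and the argument below goes through with $\sigma_q(F)+\varepsilon$ in place of $\sigma_q(F)$, letting $\varepsilon\to 0$ at the end.) Applying the power mean inequality to $a_k=|z_k-c_q|$ then gives
\[
\sigma_p(F)\le\left(\frac1n\sum_{k=1}^n|z_k-c_q|^p\right)^{1/p}\le\left(\frac1n\sum_{k=1}^n|z_k-c_q|^q\right)^{1/q}=\sigma_q(F),
\]
where the first inequality holds because $\sigma_p(F)$ is an infimum over all centers $c\in\mathbb{C}$, in particular $\le$ the value at $c=c_q$. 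This establishes $\sigma_p(F)\le\sigma_q(F)$.

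Finally, for the bound by $\sigma_\infty(F)$, I would use that $p\mapsto\sigma_p(F)$ is non-decreasing (just shown) and uniformly bounded above — for instance, taking $c=0$ yields $\sigma_p(F)\le\max_{1\le k\le n}|z_k|$ for every $p>0$ — so the limit defining $\sigma_\infty(F)$ exists; since $\sigma_q(F)\le\sigma_p(F)$ for all $p\ge q$, letting $p\to\infty$ gives $\sigma_q(F)\le\sigma_\infty(F)$.

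I do not anticipate a genuine obstacle: the only points that require a word of care are the existence of the minimizing center $c_q$ (or the switch to approximate minimizers) and getting the direction of the power mean inequality right; the core of the argument is a single application of Jensen's inequality. If desired, one could additionally confirm that $\sigma_\infty(F)=\lim_{p\to\infty}\sigma_p(F)$ really equals $\min_{c}\max_{1\le k\le n}|z_k-c|$ by the sandwich $n^{-1/p}\max_k|z_k-c_p^{\ast}|\le\sigma_p(F)\le\max_k|z_k-c|$ valid for every $c$, but this identification is not needed for the stated chain of inequalities.
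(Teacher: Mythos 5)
Your proof is correct and takes essentially the same route as the paper's alternative proof: both reduce the claim to Jensen's inequality for the convex map $t\mapsto t^{q/p}$ (equivalently, the power mean inequality) applied to the distances $|z_k-c|$ for a fixed center $c$, and then pass to the infimum over $c$. The only cosmetic difference is that the paper notes $f(c)\le g(c)$ pointwise and takes the minimum of both sides, whereas you fix a minimizer $c_q$ for the $q$-mean and use that $\sigma_p(F)$ is an infimum; these are the same step.
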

While Khavinson et al.'s proof uses Hölder's inequality, we establish the result using only the convexity of the function $t\mapsto t^\alpha$ for $ \alpha\ge 1$.\\
	\noindent\emph{Alternative proof of Lemma \ref{kle}. }For any fixed c,  compute
	\begin{eqnarray*}
		\left( \dfrac{1}{n}\sum_{k=1}^n\left|z_k-c\right|^p\right)^\frac{1}{p}&=&\left( \left( \dfrac{1}{n}\sum_{k=1}^n\left|z_k-c\right|^p\right)^\frac{q}{p}\right)^\frac{1}{q}\\
		&\le&\left(  \dfrac{1}{n}\sum_{k=1}^n\left|z_k-c\right|^q\right)^\frac{1}{q}.
	\end{eqnarray*}
	Taking minimum of both sides of the above inequality yields our desired result. \qed
	
	For the variance $\sigma_2(F)$, by using elementary inner product space techniques, we have
	\begin{eqnarray}\label{s2e}
		n\sigma^2_2(F)=\min_{c\in \mathbb{C}}\sum_{k=1}^n\left|z_k-c\right|^2=\sum_{k=1}^n\left|z_k-\dfrac{z_1+\cdots+z_n}{n}\right|^2.
	\end{eqnarray}
	
	The following conjecture was derived by Borcea from his journey in Sendov conjecture \cite{Bor96, Bor96+}, which is a new viewpoint in the context of statistics of point charges in the plane.
	\begin{conj}[Borcea Variance Conjecture]Let $ F $ be a complex polynomial of degree $ n \geq 2 $ with zeros $ z_1, \dots, z_n $  and critical points $ w_1, \dots, w_{n-1} $.  Then for any $p\ge 1$, we have
		\[
		\max_{1\le k\le n}\min_{1\le j\le n-1}\left|z_k-w_j\right|\le \sigma_{p}(F).
		\]
	\end{conj}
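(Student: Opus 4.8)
The plan is to first strip off the parameter $p$. By Lemma \ref{kle} we have $\sigma_1(F)\le\sigma_p(F)$ for every $p\ge1$, so it suffices to prove the strongest instance, namely
\[
\max_{1\le k\le n}\min_{1\le j\le n-1}\left|z_k-w_j\right|\le\sigma_1(F).
\]
Both sides are unaffected by translating every $z_k$ (and hence every $w_j$) by a constant, so I would normalize the centroid of the zeros to the origin, $\sum_{k=1}^n z_k=0$; barycenter invariance then gives $\sum_{j=1}^{n-1}w_j=0$ as well. Picking an index $k_0$ that realizes the outer maximum and setting $\zeta=z_{k_0}$, the claim reduces to producing one critical point within distance $\sigma_1(F)$ of $\zeta$; and if $\zeta$ happens to be a multiple zero of $F$ it is itself a critical point, so we may assume $\zeta$ is simple.

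Next I would bring in the elementary interpolation identity. From $F(z)=\prod_{k=1}^n(z-z_k)$ and $F'(z)=n\prod_{j=1}^{n-1}(z-w_j)$, evaluating at $z=\zeta$ annihilates every term of $F'$ except the one omitting the factor $z-\zeta$, which gives $n\prod_{j=1}^{n-1}(\zeta-w_j)=\prod_{k\ne k_0}(\zeta-z_k)$, and hence, applying AM--GM to the $n-1$ numbers $|\zeta-w_j|$,
\[
\min_{1\le j\le n-1}\left|\zeta-w_j\right|\le\left(\frac1n\prod_{k\ne k_0}\left|\zeta-z_k\right|\right)^{\frac1{n-1}}.
\]
So everything comes down to comparing the geometric mean of the distances from the extremal zero $\zeta$ to the remaining zeros with $\sigma_1(F)=\frac1n\min_{c\in\mathbb{C}}\sum_{k=1}^n|z_k-c|$. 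The auxiliary input I would try to exploit is Schoenberg's inequality (\ref{schi}), which under our normalization reads $\sum_{j=1}^{n-1}|w_j|^2\le\frac{n-2}{n}\sum_{k=1}^n|z_k|^2=(n-2)\,\sigma_2^2(F)$, together with the extremality of $k_0$ — every other zero already carries a critical point within distance $\min_j|\zeta-w_j|$, which ought to limit how far the $z_k$ can spread around $\zeta$.

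The decisive difficulty, which I expect to be the genuine obstacle, is precisely this last comparison. Schoenberg's inequality controls only the second moment of the critical points about their centroid, whereas $\min_j|\zeta-w_j|$ depends on how the critical points cluster near the (possibly far-off) point $\zeta$: since $\sum_jw_j=0$ one only gets $\min_j|\zeta-w_j|^2\le\frac1{n-1}\sum_j|\zeta-w_j|^2=\frac1{n-1}\sum_j|w_j|^2+|\zeta|^2\le\frac{n-2}{n-1}\sigma_2^2(F)+|\zeta|^2$, and the term $|\zeta|^2$ ruins the estimate for a zero far from the centroid. Moreover the special case ``$|z_k|\le1$ for all $k$ and $p=\infty$'' of the assertion already contains the Sendov conjecture, so a full unconditional proof by these elementary means is not to be expected. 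What the Schoenberg route does give cleanly — and what I would record as partial progress — is the barycenter normalization $\zeta=0$, which yields $\min_j|w_j|\le\sqrt{(n-2)/(n-1)}\,\sigma_2(F)$ (hence the refinement of Pawlowski's bound when $\max_k|z_k|\le1$), and the case in which all zeros of $F$ are real, where the interlacing supplied by Rolle's theorem provides far stronger control than (\ref{schi}) alone.
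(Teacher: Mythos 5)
This statement is labeled a \emph{conjecture} in the paper, and the paper supplies no proof of it; indeed, since $\sigma_p(F)\le\sigma_\infty(F)\le 1$ whenever all $|z_k|\le 1$, even the $p=1$ instance would imply Sendov's conjecture, which remains open. So there is no proof of the paper's to compare yours against, and you are right not to claim one. Your assessment is accurate on all counts: the reduction to $p=1$ via Lemma \ref{kle}, the translation normalization, the identity $n\prod_j(\zeta-w_j)=\prod_{k\ne k_0}(\zeta-z_k)$ at a simple zero, and the second-moment estimate from Schoenberg's inequality are all correct, and you have correctly located the genuine obstruction — Schoenberg controls $\sum_j|w_j|^2$ about the centroid, which says nothing useful about clustering of critical points near an extremal zero $\zeta$ far from the centroid, whence the parasitic $|\zeta|^2$ term. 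The partial result you do extract, $\min_j|w_j|\le\sqrt{(n-2)/(n-1)}\,\sigma_2(F)$ with the barycenter in place of a zero, is precisely the paper's Theorem \ref{mt}, i.e.\ the barycenter case of the \emph{generalized} Borcea variance conjecture for $p\ge 2$; this is exactly what the author proves and then uses to refine Pawlowski's bound in Theorem \ref{mt1}. In short: no gap in your reasoning, but also no proof of the stated conjecture — because none is known, and the paper does not purport to give one.
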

	Now we show that for  $p=\infty$, the Borcea variance conjecture reduces to the Sendov conjecture. Define the polynomial family $\mathbb{F}=\{F(z)=\prod_{k=1}^n(z-z_k) : \max\limits_{1\le k\le n}\left|z_k\right|\le 1\}$, we  need only to show that  $\sigma_\infty:=\max\limits_{F\in \mathbb{F}}\sigma_\infty(F)=\max\limits_{F\in \mathbb{F}}\min\limits_{c\in\mathbb{C}}\max\limits_{1\le k\le n}$ $\left|z_k-c\right|=1$. Clearly, taking $c=0$, we have $\sigma_\infty\le\max\limits_{F\in \mathbb{F}}\max\limits_{1\le k\le n}\left|z_k\right|\le 1$. Furthermore, consider the polynomial $g(z)=(z+1)^{n-1}(z-1)$. For this $g$, the optimal  $c$ must be $0$, and thus $\sigma_\infty(g)=1$. This establishes $\sigma_\infty=1$.
	
	Analogous to the relationship between Schmeisser’s conjecture and Sendov’s conjecture, we propose a generalization of the Borcea variance conjecture as follows.
	\begin{conj}[Generalized  Borcea Variance Conjecture] Let $ F $ be a complex polynomial of degree $ n \geq 2 $ with zeros $ z_1, \dots, z_n $  and critical points $ w_1, \dots, w_{n-1} $. Then for any $p\ge 1$  and any nonnegative weights $l_1, \ldots, l_n\ge 0$ satisfying $\sum_{k=1}^nl_k=1$, we have
		\[
		\min_{1\le j\le n-1}\left|\sum_{k=1}^nl_k z_k-w_j\right|\le \sigma_{p}(F).
		\]
	\end{conj}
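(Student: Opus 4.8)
\smallskip
\noindent\emph{A proposed line of attack, and where it stalls.}
Fix $p\ge 1$. By Lemma~\ref{kle} we have $\sigma_1(F)\le\sigma_p(F)$, so it would suffice to prove the $p=1$ case --- the strongest one --- namely that some critical point lies within the mean deviation $\sigma_1(F)=\min_{c\in\mathbb{C}}\frac1n\sum_{k=1}^n|z_k-c|$ of the weighted mean $\mu:=\sum_{k=1}^n l_k z_k$. After translating the plane we may assume the minimizing $c$ is $0$, so $\sigma_1(F)=\frac1n\sum_{k=1}^n|z_k|$ and $0$ is a Fermat--Weber point of the zeros; the latter yields the balance relation $\bigl|\sum_{z_k\ne 0}z_k/|z_k|\bigr|\le\#\{k:z_k=0\}$, which is intended to play the structural role that the centroid identity $\frac1{n-1}\sum_{j}w_j=\frac1n\sum_{k}z_k$ plays in the proof of Theorem~\ref{1.1t}.

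Imitating that proof, one expands $\sum_{j=1}^{n-1}|\mu-w_j|^2=(n-1)|\mu|^2-2(n-1)\,\re\bigl(\overline{\mu}\cdot\tfrac1n\sum_{k}z_k\bigr)+\sum_{j=1}^{n-1}|w_j|^2$, bounds the last sum by Schoenberg's inequality~\eqref{schi}, and hopes for $\frac1{n-1}\sum_{j}|\mu-w_j|^2\le\sigma_1(F)^2$. Here the plan stalls --- and, in fact, the statement itself is too strong for small $p$: for a general weight vector $\mu$ is not the centroid of the $w_j$, so the cross term does not vanish and the $\ell^2$-average bound is false. Indeed, take $g(z)=(z+1)^{n-1}(z-1)$; then $g'(z)=n(z+1)^{n-2}\bigl(z-\tfrac{n-2}{n}\bigr)$, so the critical points are $-1$ (multiplicity $n-2$) and $\tfrac{n-2}{n}$, while $\sigma_1(g)=\tfrac2n$ and $\sigma_2(g)=\tfrac{2\sqrt{n-1}}{n}$. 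Choosing $\mu=-\tfrac1n$, the midpoint of the two critical clusters --- an admissible weighted mean, obtained by putting weight $\tfrac{n-1}{2n}$ on the zero $1$ --- gives $\min_{1\le j\le n-1}|\mu-w_j|=\tfrac{n-1}{n}$, which exceeds $\sigma_1(g)$ once $n\ge 4$ and exceeds $\sigma_2(g)$ once $n\ge 6$, the discrepancy widening with $n$. So no second-moment argument can succeed, and for $p$ too small the right-hand side $\sigma_p(F)$ is simply smaller than the left-hand side.

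The salvageable target is the endpoint $p=\infty$. Rescaling so that the Chebyshev center of the zeros is $0$ and $\sigma_\infty(F)=\max_k|z_k|=1$, the assertion $\min_{1\le j\le n-1}\bigl|\sum_{k}l_k z_k-w_j\bigr|\le\sigma_\infty(F)$ becomes precisely the Schmeisser Conjecture, and conversely Schmeisser yields this case for every $F$. I would attack that version through the argument principle for the field $\tfrac{F'}{F}(z)=\sum_{k}\tfrac1{z-z_k}$ on the circle $\partial D(\mu,1)$, trying to force a zero of $F'$ inside it whenever every $z_k$ lies in $\overline{D}(0,1)$, with the location constraint on the $z_k$ feeding the winding-number count. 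The main obstacle --- and the reason this is genuinely hard rather than a corollary of Sendov's or Borcea's conjecture --- is that $\mu\mapsto\min_{j}|\mu-w_j|$ is \emph{not} maximized at the vertices of $\mathrm{conv}\{z_1,\dots,z_n\}$ (that is exactly why the weighted problem is strictly harder, and why the examples above break it), so there is no reduction to the zero-versus-critical-point inequality; one must control $F'/F$ uniformly over the entire convex hull of the zeros, and producing the winding contradiction with the sharp constant is where essentially all the difficulty sits.
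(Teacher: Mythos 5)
This statement is a conjecture that the paper itself does not prove, so there is no proof to compare yours against; what matters is whether your analysis of it is sound. It is, and it is in fact more than a stalled proof attempt: your example refutes the conjecture as stated. I have checked the computation. For $g(z)=(z+1)^{n-1}(z-1)$ the critical points are $-1$ (multiplicity $n-2$) and $\tfrac{n-2}{n}$, and assigning total weight $\tfrac{n-1}{2n}$ to the zero $1$ and $\tfrac{n+1}{2n}$ to the copies of $-1$ gives the admissible weighted mean $\mu=-\tfrac1n$, equidistant from both critical points at distance $\tfrac{n-1}{n}$. Since $\sigma_1(g)=\tfrac2n$ and $\sigma_2(g)=\tfrac{2\sqrt{n-1}}{n}$, the claimed inequality fails for $p=1$ whenever $n\ge4$ and for $p=2$ whenever $n\ge6$, with the ratio tending to infinity (resp.\ growing like $\sqrt{n}/2$). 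Note that $g$ satisfies the \emph{original} Borcea variance conjecture with equality at $p=1$ ($\max_k\min_j|z_k-w_j|=\tfrac2n=\sigma_1(g)$), which confirms the computation and isolates exactly what breaks in the weighted version: the weighted mean can be parked in the gap between critical-point clusters, where no zero sits. Your further observations are also correct --- the $p=\infty$ endpoint is, after normalizing the Chebyshev center and radius, precisely Schmeisser's conjecture, and the second-moment/Schoenberg machinery of Theorem~\ref{mt} cannot reach general weights because $\mu$ is no longer the centroid of the $w_j$, so the cross term survives.

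Two minor points. First, the Fermat--Weber ``balance relation'' in your opening paragraph is never used and can be cut. Second, the conclusion you should draw explicitly is that the conjecture needs to be restated: as written (``for any $p\ge1$'') it is false, and the largest plausible surviving claims are the $p=\infty$ case (equivalent to Schmeisser) and the barycentric weights for $p\ge2$, which is exactly the case Theorem~\ref{mt} establishes with the improved constant $\sqrt{\tfrac{n-2}{n-1}}$. A natural repaired form would ask for the least $p=p(n)$ for which the weighted inequality can hold; your example shows $p(n)\to\infty$ at least logarithmically, since $\sigma_p(g)^p\ge\tfrac1n\cdot\bigl(\tfrac{2(n-1)}{n}\bigr)^p$ forces $p\gtrsim\log n$ before $\sigma_p(g)$ can reach $\tfrac{n-1}{n}$.
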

	Now, for $p\ge 2$, we confirm the barycenter case of the generalized Borcea variance conjecture using a smaller coefficient $\sqrt{\tfrac{n-2}{n-1}}$.
	\begin{thm}\label{mt}Let $ F $ be a complex polynomial of degree $ n \geq 2 $ with zeros $ z_1, \dots, z_n $  and critical points $ w_1, \dots, w_{n-1} $. Then
	\begin{eqnarray}\label{me}
				\min_{1\le j\le n-1}\left|\dfrac{\sum_{k=1}^n z_k}{n}-w_j\right|\le \sqrt{\dfrac{n-2}{n-1}}\sigma_{2}(F),
	\end{eqnarray}
where equality holds  iff $n=2$; $n=3$ with $z_1,z_2,z_3$  collinear; or $n>3$ with all zeros $z_1,\ldots,z_n$  equal.
	\end{thm}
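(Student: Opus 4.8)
The plan is to reduce inequality \eqref{me} to the Schoenberg inequality \eqref{schi}, exactly as in the proof of Theorem \ref{1.1t} but keeping track of the sharper constant. First I would observe that it suffices to bound the \emph{average} of the squared distances rather than the minimum: since the minimum over $j$ is at most the average,
\begin{eqnarray}\label{plan-reduce}
\min_{1\le j\le n-1}\left|\dfrac{\sum_{k=1}^n z_k}{n}-w_j\right|^2\le \dfrac{1}{n-1}\sum_{j=1}^{n-1}\left|\dfrac{\sum_{k=1}^n z_k}{n}-w_j\right|^2,
\end{eqnarray}
so it is enough to prove $\sum_{j=1}^{n-1}\bigl|\tfrac{1}{n}\sum_k z_k-w_j\bigr|^2\le (n-2)\,\sigma_2^2(F)$. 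Expanding the left side and using barycenter invariance $\tfrac{1}{n-1}\sum_j w_j=\tfrac{1}{n}\sum_k z_k$ (Vieta), the cross term collapses and one gets
\begin{eqnarray}\label{plan-expand}
\sum_{j=1}^{n-1}\left|\dfrac{\sum_{k=1}^n z_k}{n}-w_j\right|^2=\sum_{j=1}^{n-1}\left|w_j\right|^2-(n-1)\left|\dfrac{\sum_{k=1}^n z_k}{n}\right|^2.
\end{eqnarray}

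Next I would feed in Schoenberg's inequality \eqref{schi} to bound $\sum_{j=1}^{n-1}|w_j|^2$ from above by $\tfrac{1}{n^2}|\sum_k z_k|^2+\tfrac{n-2}{n}\sum_k|z_k|^2$. Substituting into \eqref{plan-expand}, the terms involving $|\sum_k z_k|^2$ combine with coefficient $\tfrac{1}{n^2}-(n-1)\tfrac{1}{n^2}=-\tfrac{n-2}{n^2}$, giving
\begin{eqnarray}\label{plan-combine}
\sum_{j=1}^{n-1}\left|\dfrac{\sum_{k=1}^n z_k}{n}-w_j\right|^2\le \dfrac{n-2}{n}\sum_{k=1}^n|z_k|^2-\dfrac{n-2}{n^2}\left|\sum_{k=1}^n z_k\right|^2=\dfrac{n-2}{n}\left(\sum_{k=1}^n|z_k|^2-\dfrac{1}{n}\left|\sum_{k=1}^n z_k\right|^2\right).
\end{eqnarray}
The parenthesized quantity is precisely $n\sigma_2^2(F)$ by \eqref{s2e}, so the right side equals $(n-2)\sigma_2^2(F)$, which is exactly what \eqref{plan-reduce} needs. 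Dividing by $n-1$ and taking square roots yields \eqref{me}. Note this also handles the degenerate cases gracefully: when $n=2$ both sides vanish.

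For the equality characterization I would trace back where the two inequalities used can be equalities. Inequality \eqref{plan-reduce} (min $\le$ average) is an equality iff all the quantities $|\tfrac{1}{n}\sum_k z_k-w_j|$ coincide, and Schoenberg's \eqref{schi} is an equality iff $z_1,\dots,z_n$ are collinear. Combining these two conditions — all critical points equidistant from the centroid \emph{and} all zeros collinear — forces, for $n>3$, all zeros to coincide (collinear zeros have collinear critical points, and then equidistance from a point on that line pins them together), while for $n=3$ collinearity alone already makes \eqref{plan-reduce} an equality since there is essentially one nontrivial constraint, and for $n=2$ everything is trivially an equality. The main obstacle is not the inequality itself — that is a two-line consequence of Schoenberg — but verifying the equality cases cleanly, in particular arguing that for $n>3$ collinear-plus-equidistant really does collapse all zeros to a point; I would do this by placing the collinear zeros on $\mathbb{R}$, noting the critical points are then also real, and checking that a real polynomial of degree $>3$ with all critical points equal must be $(z-a)^n$.
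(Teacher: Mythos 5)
Your proof of the inequality itself is correct and follows the paper's argument essentially step by step: reduce the minimum to the average over the $n-1$ critical points, expand using barycenter invariance $\tfrac{1}{n-1}\sum_j w_j = \tfrac{1}{n}\sum_k z_k$ so the cross term collapses, apply Schoenberg's inequality \eqref{schi}, and recognize the resulting expression as $(n-2)\sigma_2^2(F)$ via \eqref{s2e}. No issues there.

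The gap is in the equality characterization for $n\ge 4$. Equality forces two things: the zeros are collinear (from Schoenberg), and all $n-1$ critical points are equidistant from the barycenter (from min $=$ average). After normalizing the zeros to $\mathbb{R}$, the critical points are real and equidistant from a real point $G$ — but ``equidistant from $G$ on $\mathbb{R}$'' means each critical point sits in the \emph{two}-point set $\{G-R,\,G+R\}$, not at a single point. Your phrase ``equidistance from a point on that line pins them together'' is where the argument breaks: it doesn't pin them together; it allows $R>0$ with critical points split between $G-R$ and $G+R$. You then reduce to ``a real polynomial of degree $>3$ with all critical points equal must be $(z-a)^n$,'' which is a correct fact but answers a different, weaker hypothesis. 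The genuinely hard step — and the one the paper spends most of its proof on — is ruling out $R>0$. The paper does this by classifying critical points via Rolle's theorem (repeated roots versus intercritical points), noting by barycenter invariance that $G-R$ and $G+R$ must carry equal multiplicities, and then running a case analysis on the number $k$ of distinct real zeros ($k\ge 4$, $k=3$, $k=2$) to derive a contradiction in each case. Without some version of that analysis, the claim ``collinear $+$ equidistant $\Rightarrow$ all zeros coincide for $n\ge 4$'' is unproven.

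Minor remark: for $n=3$ your reasoning is essentially right but could be made explicit — with exactly two critical points whose average is $G$, they are automatically equidistant from $G$, so Schoenberg's collinearity condition is the only binding constraint.
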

	\begin{proof} By the explicit expression (\ref{s2e}) of $\sigma_{2}(F)$, we need to show
		\begin{eqnarray}\label{e1}
				\min_{1\le j\le n-1}\left|\dfrac{\sum_{k=1}^n z_k}{n}-w_j\right|^2\le \dfrac{n-2}{n(n-1)} \sum_{k=1}^n\left|z_k-\dfrac{\sum_{k=1}^nz_k}{n}\right|^2.
		\end{eqnarray}
			It is sufficient to show that
		\begin{eqnarray}\label{e2}
			\dfrac{1}{n-1}\sum_{j=1}^{n-1}\left|\dfrac{\sum_{k=1}^n z_k}{n}-w_j\right|^2\le \dfrac{n-2}{n(n-1)} \sum_{k=1}^n\left|z_k-\dfrac{\sum_{k=1}^nz_k}{n}\right|^2.
		\end{eqnarray}
		Equivalently,
		\begin{eqnarray}\label{2e}
			\sum_{j=1}^{n-1}\left|\dfrac{\sum_{k=1}^n z_k}{n}-w_j\right|^2\le \dfrac{n-2}{n} \sum_{k=1}^n\left|z_k-\dfrac{\sum_{k=1}^nz_k}{n}\right|^2.
		\end{eqnarray}
		Compute
		\begin{eqnarray*}
			\text{LHS}_{(\ref{2e})}&=&\sum_{j=1}^{n-1}\left|\dfrac{\sum_{k=1}^n z_k}{n}\right|^2-2\sum_{j=1}^{n-1}\re\overline{w_j} \dfrac{\sum_{k=1}^n z_k}{n}+\sum_{j=1}^n\left|w_j\right|^2\\
			&=&\sum_{j=1}^{n-1}\left|w_j\right|^2-(n-1)\left|\dfrac{\sum_{k=1}^n z_k}{n}\right|^2, \\
			&&\text{(Barycenter invariance gives that $\tfrac{\sum_{j=1}^{n-1}w_j}{n-1}=\tfrac{\sum_{k=1}^{n}z_k}{n}$)}\\
			\text{RHS}_{(\ref{2e})}&=&\dfrac{n-2}{n} \left( \sum_{k=1}^n\left|z_k\right|^2-n\left|\dfrac{ \sum_{k=1}^nz_k}{n}\right|^2\right)\\
			&=&  \dfrac{n-2}{n}\sum_{k=1}^n\left|z_k\right|^2-(n-2)\left|\dfrac{ \sum_{k=1}^nz_k}{n}\right|^2.
		\end{eqnarray*}
		Schoenberg inequality (\ref{schi}) implies that $\text{LHS}_{(\ref{2e})}\le\text{RHS}_{(\ref{2e})}$.
		
		Next, we examine the conditions for  equality  to hold in (\ref{me}). Recall that (\ref{e1}) is equivalent to (\ref{e2}) iff all critical points $w_j$ are equidistant from the barycenter. Schoenberg's inequality (\ref{schi}) holds with equality iff all zeros $z_1, \ldots, z_n$ are collinear in the complex plane. 
		
		When $n= 2$, the result is trivial.
		
		When $n= 3$,  the barycenter is $G=\frac{z_1+z_2+z_3}{3}=\frac{w_1+w_2}{2}$.  Consequently, the critical points are always equidistant from this barycenter. Furthermore, equality in \eqref{me} holds if and only if all zeros $z_1, z_2, z_3$ are collinear.

When $n\ge 4$,   we assert that equality in \eqref{me} holds if and only if all zeros $z_1, z_2, \ldots, z_n$ are identical.  Given that $z_1, \ldots, z_n$ are collinear,  we may rotate and translate the roots of $F$ to assume $z_1, \ldots, z_n \in \mathbb{R}$ (i.e., on the real axis), Let the distinct real roots  of \(F\)  be denoted by
	\[
	\alpha_1<\alpha_2<\cdots<\alpha_k,
	\]
	with multiplicities $m_1, m_2, \ldots, m_k$ respectively, where  $	\sum_{i=1}^k m_i = n$. Define the barycenter $G$ and related quantities:
	\[
	G=\frac1n\sum_{i=1}^n z_i, \quad R>0,\quad
	a=G-R,\quad b=G+R.
	\]
	We assume that every critical point of $F$ (counting multiplicity) lies in the two-point set \(\{a,b\}\). Moreover, barycenter invariance yields the critical points of $F$ at positions $a$ and $b$ have the same multiplicity.

	By Rolle's theorem and multiplicity considerations, the critical points of \(F\) can be classified into two categories:
	\begin{enumerate}
		\item Repeated roots: Each \(\alpha_i\) with \(m_i\ge2\) yields a critical point at \(z=\alpha_i\) of multiplicity \(m_i-1\);
		\item Intercritical points: Each open interval \((\alpha_i,\alpha_{i+1})\) contains exactly one simple critical point, denoted \(\beta_i\), for \(i=1,\dots,k-1\).
     \end{enumerate}
	 The total multiplicity of critical points is then	$\sum_{i=1}^k (m_i-1) + (k-1) = (n-k)+(k-1) = n-1 \;\ge3$.

We now examine three subcases based on \(k\):
	
	\medskip
	\noindent\textbf{Case 1: \(k\ge4\).}
	\begin{itemize}
		\item With \(k-1\ge3\),  the simple critical points \(\beta_1,\beta_2,\beta_3\) lie in the disjoint intervals \((\alpha_1,\alpha_2)\), \((\alpha_2,\alpha_3)\) and \((\alpha_3,\alpha_4)\), respectively.
	\end{itemize}
However, these three distinct points must all lie in \(\{a,b\}\), which is impossible.
	
	\medskip
	\noindent\textbf{Case 2: \(k=3\).} The distinct roots are \(\alpha_1<\alpha_2<\alpha_3\) and \(m_1+m_2+m_3=n\ge4\).
	\begin{itemize}
			\item There are two simple critical points \(\beta_1\in(\alpha_1,\alpha_2)\) and \(\beta_2\in(\alpha_2,\alpha_3)\).
		\item The repeated–root total multiplicity is
		\((m_1-1)+(m_2-1)+(m_3-1)=n-3\ge1\), so at least one critical point appears at some \(\alpha_i\).
	\end{itemize}
 Thus, there are at least three distinct critical–point locations:
\(\{\alpha_i\}\), \(\{\beta_1\}\), and \(\{\beta_2\}\).  They cannot all lie in the two–point set \(\{a,b\}\).	This is a contradiction.
	
	\medskip
	\noindent\textbf{Case 3: \(k=2\).} The distinct roots are \(\alpha_1<\alpha_2\) with \(m_1+m_2=n\ge4\).
	\begin{itemize}
		\item There is a simple critical point \(\beta\in(\alpha_1,\alpha_2)\).
		\item The repeated–root multiplicity is \((m_1-1)+(m_2-1)=n-2\ge2\), which must be distributed between \(\alpha_1\) and \(\alpha_2\).
		\begin{enumerate}
			\item $m_1\ge 3, m_2=1$, $F$  has another critical point $\alpha_1$ of multiplicity at least $2$. This contradicts the requirement that the critical points of $F$ at  $\beta$ and $\alpha_1$ have the same multiplicity. 
			\item $m_1\ge 2, m_2\ge 2$,  $F$  has two additional distinct distinct critical points $\alpha_1, \alpha_2$. This contradicts the requirement that every critical point of $F$ (counting multiplicity) lies in the two-point set \(\{a,b\}\).
			\item $m_1=1, m_2\ge 3$, $F$  has  another critical point $\alpha_2$ of multiplicity at least $2$. This contradicts the requirement that the critical points of $F$ at  $\beta$ and $\alpha_2$ have the same multiplicity.
		\end{enumerate}
	\end{itemize}
	
	\medskip
	\noindent
	In every subcase for \(n\ge4\),  the assumption that all $n-1$ critical points lie in the two-point set $\{a,b\}$ and that critical points at $a$ and $b$ have identical multiplicity leads to a contradiction.  Therefore, when $n\ge 4$,  one must have  $R=0$, i.e., all $w_j$ are equal,  implying all $z_j$ are identical.
	\end{proof} 
	As an application of Theorem \ref{mt}, we prove our main result in this paper.
	\begin{thm}\label{mt1}
		Let $ F $ be a complex polynomial of degree $ n \geq 2 $ with zeros $ z_1, \dots, z_n $ (all satisfying $ |z_k| \leq 1 $) and critical points $ w_1, \dots, w_{n-1} $.  Assume that $\gamma_n$ is the radius of the smallest concentric disk centered at $\tfrac{1}{n} \sum_{k=1}^n z_k$ containing at least one zero of $F'(z)$. Then	\[
		\gamma_n\le\sqrt{\tfrac{n-2}{n-1}} .
		\]
	\end{thm}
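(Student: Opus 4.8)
The plan is to obtain Theorem~\ref{mt1} as an immediate corollary of Theorem~\ref{mt}, the only additional ingredient being the elementary observation that the hypothesis $\max_{1\le k\le n}|z_k|\le 1$ forces $\sigma_2(F)\le 1$.

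Concretely, I would first note that for any monic $F(z)=\prod_{k=1}^n(z-z_k)$ with $|z_k|\le 1$ for all $k$, taking the competitor $c=0$ in the definition of $\sigma_2(F)$ yields
\[
\sigma_2(F)=\min_{c\in\mathbb{C}}\left(\frac1n\sum_{k=1}^n|z_k-c|^2\right)^{\!1/2}\le\left(\frac1n\sum_{k=1}^n|z_k|^2\right)^{\!1/2}\le 1 .
\]
Next, since $F$ is monic of degree $n$, the zeros of $F'$ are precisely the critical points $w_1,\dots,w_{n-1}$, so the radius of the smallest disk centered at $\tfrac1n\sum_k z_k$ that contains a zero of $F'$ is exactly $\min_{1\le j\le n-1}\bigl|\tfrac1n\sum_{k=1}^n z_k-w_j\bigr|$. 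Applying Theorem~\ref{mt} and then the bound above gives, for every such $F$,
\[
\min_{1\le j\le n-1}\left|\frac{\sum_{k=1}^n z_k}{n}-w_j\right|\le\sqrt{\frac{n-2}{n-1}}\,\sigma_2(F)\le\sqrt{\frac{n-2}{n-1}},
\]
which is precisely $\gamma_n\le\sqrt{\tfrac{n-2}{n-1}}$ (and, a fortiori, the same bound for the supremum of this quantity over all admissible $F$ of degree $n$).

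There is no real obstacle in this argument: all the work has already been done in establishing Theorem~\ref{mt}, which itself rests on the Schoenberg inequality~\eqref{schi}. The one point worth flagging is that the estimate $\sigma_2(F)\le 1$ is typically very wasteful — equality requires every $z_k$ to lie on the unit circle with centroid at the origin — so the resulting bound $\gamma_n\le\sqrt{\tfrac{n-2}{n-1}}\sim 1-\tfrac{1}{2n}$, while already much sharper than Pawłowski's $1-\tfrac12\bigl(\tfrac{\log n}{n}\bigr)^2$, still does not reach the conjectural $1-c\,\tfrac{\log n}{n}$ of Conjecture~\ref{pawc}. Closing that gap would presumably require replacing the crude bound on $\sigma_2(F)$ by finer information on how the root configuration constrains the variance, which is not attempted here.
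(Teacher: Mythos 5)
Your proof is correct and follows essentially the same route as the paper: both obtain the result as an immediate consequence of Theorem~\ref{mt} combined with the bound $\sigma_2(F)\le 1$ under the hypothesis $|z_k|\le 1$. The only (minor) difference is that the paper derives $\sigma_2(F)\le 1$ by invoking Lemma~\ref{kle} to pass through $\sigma_\infty(F)\le 1$, whereas you obtain it more directly by taking the competitor $c=0$ in the definition of $\sigma_2(F)$, which is arguably more elementary.
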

	\begin{proof} First, by Lemma \ref{kle}, under the assumption that all $\left|z_k\right|\le 1$, we know that $\sigma_2(F)\le \sigma_\infty(F)\le 1$. Together with Theorem \ref{mt}, we have $\gamma_n\le\sqrt{\tfrac{n-2}{n-1}} $.
	\end{proof}
	\begin{rem}
		Theorem \ref{mt1} is an improvement of Theorem \ref{pawt}.
	\end{rem}
	\begin{proof}
		 In fact, we  need to show that for any $n\ge 3$,
		\begin{eqnarray*}
			&&\sqrt{\dfrac{n-2}{n-1}}< \dfrac{2n^\frac{1}{n-1}}{n^\frac{2}{n-1}+1}.\\	\end{eqnarray*}This is equivalent to
		\begin{eqnarray*}
			&&\dfrac{n-2}{n-1}< \left(\dfrac{2n^\frac{1}{n-1}}{n^\frac{2}{n-1}+1} \right)^2 \\
			&\Leftrightarrow&1-\dfrac{1}{n-1}< 1-\dfrac{(n^\frac{2}{n-1}-1)^2}{(n^\frac{2}{n-1}+1)^2} \\
			&\Leftrightarrow&  \dfrac{n^\frac{2}{n-1}-1}{n^\frac{2}{n-1}+1}< \sqrt{\dfrac{1}{n-1}}.
		\end{eqnarray*}
		The last inequality can be obtained  by verifying that
		\begin{eqnarray*}
			\dfrac{n^\frac{2}{n-1}-1}{n^\frac{2}{n-1}+1}&\le& \dfrac{1}{2}\log(n^\frac{2}{n-1})\\
			&=& \dfrac{1}{n-1} \log n\\
			&<& \dfrac{1}{n-1} \sqrt{n-1}\\
			&=&\sqrt{\dfrac{1}{n-1}}.
		\end{eqnarray*}
	\end{proof}
	\begin{rem}
		The estimate  of the upper bound of $\gamma_3$ given in	Theorem $\ref{mt1}$, $\gamma_3\le \tfrac{\sqrt{2}}{2}$, is not sharp, since Theorem \ref{pawt} says $\gamma_3=\tfrac{1}{3}$. Notice that $\sqrt{\tfrac{n-2}{n-1}}=\sqrt{1-\tfrac{1}{n-1}}\sim 1-\tfrac{1}{2}\left( \tfrac{1}{n}\right)>1-c\tfrac{\log n}{n}$ for any positive constant $c$, therefore  Conjecture \ref{pawc} remains open.
	\end{rem}
	\begin{rem}
	To the best of the author's knowledge, there have been many studies on the proof and generalization of Schoenberg inequality (\ref{schi}), but few have found an interesting application of this inequality. This article provides such an application.	 
	\end{rem}

	\section*{Acknowledgments}  
The author thanks Professors M. Lin and H. Sendov for their valuable suggestions.

	
	
	
	
	\bibliographystyle{elsarticle-num}
	
	

\end{document}